


\documentclass[leqno]{aadmbook}
\usepackage{amsthm}
\usepackage{amsfonts}
\usepackage{amsmath}
\pagestyle{myheadings}

\usepackage{latexsym,amsfonts,mathptm}
\usepackage{epsfig,float,graphicx}
\restylefloat{figure} \floatplacement{figure}{htbp}
\textwidth 28cc

\def\Dj{\rlap{-}D}

\markboth{{\small\rm \hfill Jelena \Dj oki\'
c
\hfill}\hspace{-\textwidth}%
\underline{${{}_{}}_{}$\hspace{\textwidth}}}
{\underline{${{}_{}}_{}$\hspace{\textwidth}}\hspace{-\textwidth}%
{\small\rm \hfill
 A note on the order of the double reduced 2-factor  transfer digraph for rectangular grid graphs
  \hfill}}

\setcounter{page}{1}
\textheight 42cc
\parskip .5mm
\parindent 2cc

\newtheorem{thm}{Theorem}
\newtheorem{lem}{Lemma}

\newtheorem{df}{Definition}

\newtheorem{exm}{Example}

\def\ds{\displaystyle}
\def\dzn{,\kern-0.1em,}

\input cyracc.def
 

\def\be{\begin{equation} }
\def\ee{\end{equation} }
\def\bfl{\begin{flushleft} }
\def\efl{\end{flushleft} }
\def\bfr{\begin{flushright} }
\def\efr{\end{flushright} }
\def\bc{\begin{center}}
\def\vs*{\vspace*}
\def\hs*{\hspace*}
\def\ec{\end{center}}
\def\beq{\begin{eqnarray}}
\def\eeq{\end{eqnarray}}

\def\ben{\begin{enumerate}}
\def\een{\end{enumerate}}
\def\bit{\begin{itemize}}
\def\eit{\end{itemize}}

\begin{document}


\oddsidemargin 16.5mm
\evensidemargin 16.5mm

\thispagestyle{plain}



\vspace{5cc}
\begin{center}
{\large\bf  A short note on the order of the double reduced 2-factor  transfer digraph for rectangular grid graphs
\rule{0mm}{6mm}\renewcommand{\thefootnote}{}
\footnotetext{\scriptsize 2010 Mathematics Subject Classification.
05C38, 05C50, 05A15, 05C30, 05C85.

\rule{2.4mm}{0mm}Keywords and Phrases:  2-factor,  transfer matrix, palindromes,  grid graphs}}

\vspace{1cc} {\large\it   Jelena \Dj oki\' c}

\vspace{1cc}
\parbox{24cc}{{\small
We prove that the order of
 the double reduced 2-factor transfer digraph ${\cal R}^{**}_{m}$ which is
needed for the  enumeration of the spanning unions of cycles   in  the
rectangular grid graph $P_m \times P_n$ ($m,n \in N$),  when $m$ is odd,
is equal to $\ds \mid  V({\cal R}^{**}_{m}) \mid =  \frac{1}{2} \left[{m+1 \choose (m-1)/2 } + {(m+1)/2  \choose \lfloor (m+1)/4  \rfloor}\right].$
 }}
\end{center}


\vspace{1.5cc}
\begin{center}
{\bf  1.  INTRODUCTION}
\end{center}
\label{sec:intro}

\vspace*{4mm}

Even though the research devoted  to the enumeration of  Hamiltonian cycles on special classes of  grid graphs of fixed width (e.g. Cartesian products of paths or cycles)
 has been going on for more than thirty years \cite{TBKS},  there still remain
 many  open  questions (for more details see \cite{BKP1,BKDP1,BKDjDP}).
 Our previous studies suggest that  the  answers  on  most of these questions lie within the structure of so-called \emph{transfer digraphs} for Hamiltonian cycles - auxiliary digraphs using which the counting is performed. The first step towards this aim
 is to investigate the structure of transfer digraphs for 2-factors which are the  natural generalization of the concept of Hamiltonian cycles.

A spanning subgraph of  a graph $G$  where each vertex has exactly two neighbors is called \emph{ 2-factor }. In other words it  represents the spanning  union of cycles. In  the case of the existence of only one cycle, this 2-factor is the Hamiltonian cycle. The systematic study on the 2-factor transfer digraphs has recently   begun \cite{DjBD1,DjDB2,DjDB3}.
The first results refer to the  following  grid  graphs.

\begin{df} \label{def:grafovi} \ \
The \textbf{rectangular (grid) graph $RG_m(n)$} and \textbf{  thick (grid) cylinder $TkC_m(n)$} ($m,n \in N$) are
 $P_m \times P_{n}$ and  $P_m \times C_n$, respectively.
\ The \textbf{Moebius strip $MS_m(n)$}  is obtained from $RG_m(n+1) = P_m \times P_{n+1}$
  by identification  of  corresponding vertices from the first and  last  column in the opposite direction without duplicating edges.
The value $m \in N$ is called the \textbf{width} of the grid graph.
\end{df}

All the three classes of these grid graphs  are called \emph{linear} ones because
the   subgraph  induced by  all vertices from any column of a such grid graph is the path $P_m$.
The 2-factor of the rectangular grid graph  $RG_7(8)$ depicted in  Figure~\ref{SSL} (a)
consists of three cycles.

\begin{figure}[H]
\begin{center}
\includegraphics[width=4.5in]{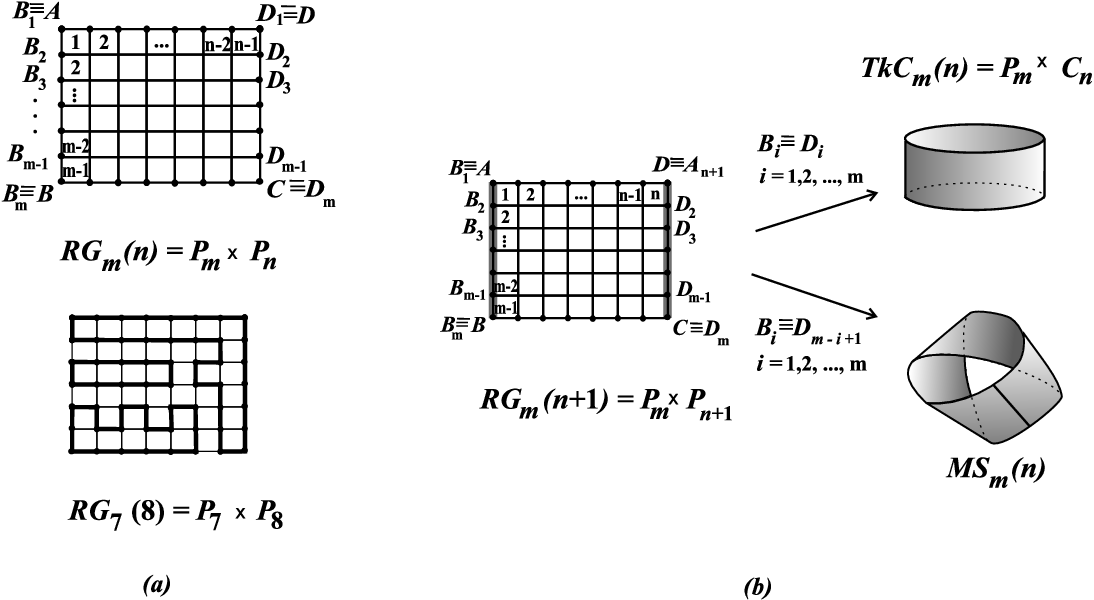}
\\ \ \vspace*{-18pt}
\end{center}
\caption{ {\bf (a)} The rectangular grid  $RG_m(n) = P_m \times P_n$ and  $RG_7(8)$  with one of its  2-factors (in bold lines);
  {\bf (b)} Constructing of the  thick cylinder  $TkC_m(n) = P_m \times C_n$ and  Moebius strip  $MS_m(n)$ from $RG_m(n + 1)$.}
\label{SSL}
\end{figure}
\unskip

Observe  a 2-factor of an arbitrary linear grid graph $G$ and its edges   which are incident to a vertex $v$ of $G$.
All the possible  arrangements  of these edges around $v$ are shown in
  Figure~\ref{CvorniKod1} (the edges in bold belong to the 2-factor).
They determine for each vertex of $G$  its \emph{code letter}.
Note that the code letters for vertices in corners of $G = RG_m(n)$  are uniquely determined and are from the set $\{ a, c, d, f \}$, while there exist
exactly three possible letters for the  other boundary vertices   of $G$.

\begin{figure}[H]
\begin{center}
\includegraphics[width=3in]{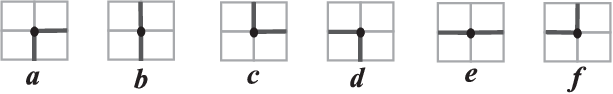}
\\ \ \vspace*{-18pt}
\end{center}
\caption{The six possible arrangements of the two edges  around any vertex with assigned   code letters.}
\label{CvorniKod1}
\end{figure}
\unskip

\begin{df} \label{def:2-factor} \cite{DjBD1}
For a given 2-factor of a linear  grid graph $G$  of width $m$  and with $m \cdot n$ vertices ($m,n \in N$),
the {\bf {\em code matrix}} $ \ds \left[ \alpha_{i,j}\right]_{m \times n}$  is
a matrix of order $m \times n$ with entries from $\{ a,b, c, d, e,f \}$ where
$\alpha_{i,j}$ is the code letter for the $i$-th vertex in $j$-th
column of $G$.
A  word over alphabet $\{ a,b,c,d,e,f \}$ of length $m$ obtained by reading  a column of the code matrix from top to down is called  {\bf {\em  alpha-word}}.
\end{df}

\begin{exm}  \label{exm:0}
For the  2-factor of $RG_7(8)$ depicted in  Figure~\ref{SSL} (a), the alpha words for its columns are $(ac)^2abc$, $e^4dce$, $e^4afe$, $e^4dce$, $e^2dfafe$,
$e^2acdbf$, $edfdb^2c$ and $db^5f$ (in that order).
\end{exm}

The possibility that two vertices are adjacent in the considered  2-factor is expressed through the two  auxiliary digraphs  ${\cal D}_{ud}$ and ${\cal D}_{lr}$
depicted  in   Figure~\ref{CvorniKod2}.

\begin{figure}[H]
\begin{center}
\includegraphics[width=3in]{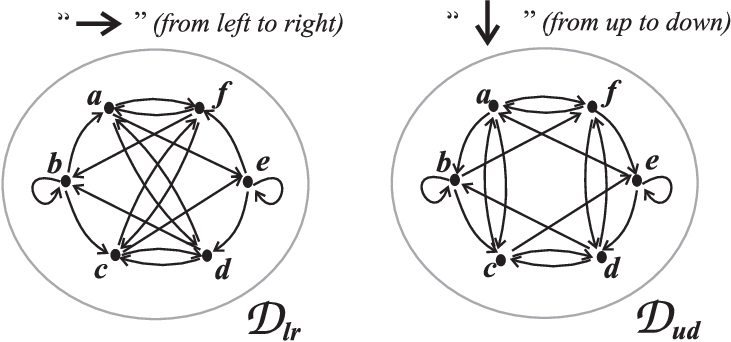}
\\ \ \vspace*{-18pt}
\end{center}
\caption{The  digraphs ${\cal D}_{ud}$ and  ${\cal D}_{lr}$.}
\label{CvorniKod2}
\end{figure}
\unskip

For each alpha-letter $\alpha$, we denote by $\overline{\alpha}$ the alpha-letter
of the situation from  Figure~\ref{CvorniKod1}  obtained
by applying reflection  symmetry with the horizontal  axis as its line of symmetry. Precisely,
  $\ds
\overline{a}  \stackrel{\rm def}{=}  c, \overline{b}  \stackrel{\rm def}{=}  b, \overline{c}  \stackrel{\rm def}{=}  a$,
 $ \overline{d}  \stackrel{\rm def}{=}  f, \overline{e}  \stackrel{\rm def}{=}  e$ and  $\overline{f}  \stackrel{\rm def}{=}  d$.
 Further, for any alpha-word $v = \alpha_{1}\alpha_{2} \ldots \alpha_{m} \in V({\cal D}_{m})$, we introduce   $\overline{v} \stackrel{\rm def}{=}\overline{\alpha}_m \overline{\alpha}_{m-1} \ldots \overline{\alpha}_1 \in V({\cal D}_{m})$.

\begin{thm}   \label{thm:stara} (The characterization of a 2-factor \cite{DjBD1}) \\
The code matrix  $ \ds \left[ \alpha_{i,j}\right]_{m \times n}$ for a given 2-factor of a linear grid graph $G$ of width $m$ ($m \in N$)
 has the following properties:

\begin{enumerate}
\item \textbf{Column conditions:} \\ For every fixed \ $j$ ($1 \leq j \leq n$) the ordered pairs \ $ (\alpha_{i,j}, \alpha_{i+1,j})$
  must be arcs in the digraph \ ${\cal D}_{ud}$  for $1 \leq i \leq m-1$. Additionally,  $ \alpha_{1,j} \in \{ a, d, e \}$ \ and
\ $\alpha_{m,j} \in \{ c, e, f \}$.

\item \textbf{Adjacency of columns condition:} \\  For every  fixed  $j$ ($1 \leq j \leq n-1 $),
 the ordered pairs \ $ (\alpha_{i,j}, \alpha_{i,j+1})$ must be arcs in the digraph \ ${\cal D}_{lr}$  for $1
\leq i \leq m$.

\item \textbf{First and Last Column conditions:}
\begin{enumerate}
 \item
If $G= RG_{m}(n)$, then
the alpha-word of the first  column consists of the letters from the
set \ $\{ a, b, c \}$
 and of  the last column of the letters
  from the set \ $\{  b, d, f \}$.

\item
If $G= TkC_{m}(n)$,  then
 the ordered pairs \ $ (\alpha_{i,n}, \alpha_{i,1})$, \ where \ $1
\leq i \leq m$, \ must be arcs in the digraph \ ${\cal D}_{lr}$.

\item
If $G= MS_{m}(n)$, then
 the ordered pairs \ $ (\overline{\alpha}_{i,n}, \alpha_{m -i+1,1})$,  \ where \  $1 \leq i \leq m$,  must be arcs in the digraph \ ${\cal D}_{lr}$.
 \end{enumerate}
\end{enumerate}
The converse, for every matrix $[\alpha_{i,j}]_{m \times n }$ with entries from $\{a,b,c,d,e,f\}$
that satisfies conditions 1--3 there is  a  unique 2-factor  on the considered grid graph $G$.
\end{thm}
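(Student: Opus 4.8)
The plan is to reduce the statement to a purely local bookkeeping at single vertices and at pairs of adjacent vertices, and then to glue these local constraints together. The starting observation is that in any $2$-factor every vertex of $G$ is incident to exactly two of its (at most four) grid edges, so the incident edges may point only in the directions \emph{up}, \emph{down}, \emph{left}, \emph{right}; there are exactly $\binom{4}{2}=6$ unordered choices of two such directions, and these are precisely the six arrangements of Figure~\ref{CvorniKod1}. Hence assigning to each vertex its code letter is a well-defined bijection between the local configuration of the $2$-factor at that vertex and the alphabet $\{a,b,c,d,e,f\}$. First I would pin down which letter carries which pair of directions using the data already in the excerpt: the reflection involution $\overline{\cdot}$ swaps \emph{up} and \emph{down} while fixing \emph{left} and \emph{right}, so the self-paired letters $b$ and $e$ (those with $\overline{b}=b$, $\overline{e}=e$) must be the orientation-symmetric configurations \emph{up}--\emph{down} and \emph{left}--\emph{right}. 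The membership rules $\alpha_{1,j}\in\{a,d,e\}$ and $\alpha_{m,j}\in\{c,e,f\}$ (no edge may leave $G$ through the top or bottom) then force $e$ to be \emph{left}--\emph{right} and hence $b$ to be \emph{up}--\emph{down}, and they determine the remaining turn letters $a,c,d,f$ up to a left/right labeling fixed by Figure~\ref{CvorniKod1}.

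The second step is to read the two auxiliary digraphs as \emph{edge-agreement} relations. When a vertex with letter $\alpha$ sits directly above a vertex with letter $\beta$ in the same column, they share one potential edge, and the $2$-factor is consistent there exactly when $\alpha$ uses its \emph{down}-edge if and only if $\beta$ uses its \emph{up}-edge; inspecting the arcs of Figure~\ref{CvorniKod2} shows this to be precisely the arc set of ${\cal D}_{ud}$. In the same way an arc $(\alpha,\beta)$ of ${\cal D}_{lr}$ encodes ``$\alpha$ uses its \emph{right}-edge iff $\beta$ uses its \emph{left}-edge'' for horizontally adjacent vertices. With this dictionary in place, Conditions~1 and~2 say exactly that along every shared edge the two endpoints agree on whether that edge is present, while the boundary clauses of Condition~1 together with Condition~3 say that no present edge is forced to leave $G$ (or, for $TkC_m(n)$ and $MS_m(n)$, that the wrap-around edges agree).

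The forward implication is then immediate: starting from a genuine $2$-factor I assign code letters as above; the degree-$2$ property makes every letter well defined, shared edges automatically satisfy the agreement relations (Conditions~1 and~2), and the absence of edges leaving the grid yields the boundary and first/last-column clauses (Condition~3). For the converse and the uniqueness claim I would run the construction backwards: given any matrix obeying Conditions~1--3, define an edge set by placing the edge between two grid-adjacent vertices exactly when their two letters agree that it is present. The arc conditions guarantee the endpoints can never disagree, so this edge set is well defined; each vertex is then incident to exactly the two edges prescribed by its letter, i.e.\ the edge set is a $2$-factor, and the boundary conditions ensure that no prescribed edge points outside $G$. Since assigning letters and reconstructing edges are mutually inverse operations, the code matrix and the $2$-factor determine each other uniquely, which is exactly the bijective statement to be proved.

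The part I expect to demand the most care is matching the abstract agreement relations to the concrete arc sets of ${\cal D}_{ud}$ and ${\cal D}_{lr}$ in Figure~\ref{CvorniKod2}, that is, verifying letter by letter that the drawn arcs coincide with ``shared edge agreed upon''; this is routine but must be carried out exhaustively. The genuinely delicate case is the Moebius strip $MS_m(n)$, where the first and last columns are identified with reversed orientation: there the shared edge between row $i$ of the last column and row $m-i+1$ of the first column forces the reflected letter $\overline{\alpha}_{i,n}$ to agree with $\alpha_{m-i+1,1}$, which is precisely why Condition~3(c) is stated with the bar and the reversed row index. Checking that this index reversal is compatible with the orientation-reversing action of $\overline{\cdot}$ is where an orientation slip is most likely, so I would treat that verification most carefully.
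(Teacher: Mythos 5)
This note never proves Theorem~\ref{thm:stara}: it is quoted from \cite{DjBD1} and used here as a black box (the paper's own proofs concern only Lemmas~\ref{lem1}--\ref{lem2} and the Main Theorem), so there is no in-paper proof to compare yours against. Judged on its own terms, your argument is sound and is the natural one: the six letters biject with the $\binom{4}{2}$ choices of two directions at a vertex, the arcs of ${\cal D}_{ud}$ and ${\cal D}_{lr}$ are exactly the shared-edge agreement relations, and the two constructions (letters from a 2-factor, edges from a letter matrix) are mutually inverse, which yields both the converse and the uniqueness claim. Your pinning-down of the letters is also correctly forced by the data you cite: the involution $\overline{\cdot}$ fixes exactly $b$ and $e$, the row conditions force $e$ to be left--right and $b$ up--down, and the first/last-column sets in Condition 3(a) then determine $a,c,d,f$. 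Two remarks. First, the step you defer (matching the drawn arc sets of Figure~\ref{CvorniKod2} to the agreement relations) is indeed the only place the figure is needed, and your reading is corroborated indirectly by the count $|V({\cal D}_m)|=\frac{3^m+(-1)^m}{2}$ quoted in the paper, which is precisely the number of agreement-consistent words with the stated endpoint restrictions. Second, the Moebius case you single out as the most delicate is in fact less treacherous than you suggest: since $\overline{\cdot}$ only swaps up and down, it preserves whether a letter uses its left or its right edge, and since membership of $(\alpha,\beta)$ in ${\cal D}_{lr}$ depends only on the right-edge status of $\alpha$ and the left-edge status of $\beta$, the pair $(\overline{\alpha}_{i,n}, \alpha_{m-i+1,1})$ is an arc if and only if $(\alpha_{i,n}, \alpha_{m-i+1,1})$ is. So the bar in Condition 3(c) is conceptually motivated by the orientation reversal but technically redundant; the real content of 3(c), which your reconstruction does capture, is the row-index reversal $i \mapsto m-i+1$ in the identification of the two boundary columns.
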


Thus the counting of such code matrices for a linear grid graph $G$ of width  $m$ ($m \in N$) is  reduced to  the counting of some  specific  directed walks of fixed length
in an auxiliary  digraph $ {\cal D}_{m}
\stackrel{\rm def}{=} $  $(V({\cal D}_{m}), E({\cal D}_{m}))$.
The  vertices of this digraph are all  possible alpha  words, i.e.  all  possible  words  $\alpha_{1}\alpha_{2} \ldots \alpha_{m}$ over  alphabet $ \{ a,b,c,d,e,f \}$  which fulfill that the ordered pairs  $ (\alpha_{i}, \alpha_{i+1})$
  must be arcs in the digraph \ ${\cal D}_{ud}$  for $1 \leq i \leq m-1$ , $ \alpha_{1} \in \{ a, d, e \}$ \ and
\ $\alpha_{m} \in \{ c, e, f \}$.
An arc $(v,u) $ belongs to  $ E({\cal D}_{m})$  if and only if
the alpha word $v= v_1v_2 \ldots v_m$  can be the previous column of the alpha word $u=u_1u_2 \ldots u_m$ in  a  code  matrix  $ \left[ \alpha_{i,j}\right]_{m \times n}$,
i.e. the ordered pairs \ $ (v_{i}, u_{i})$ must be arcs in the digraph \ ${\cal D}_{lr}$  for $1 \leq i \leq m$ (in accordance to  the Adjacency of columns condition).
 When  $m \geq 2$, the  digraph ${\cal D}_{m}$  is disconnected and  $ \ds   \mid V({\cal D}_{m}) \mid = \ds \frac{3^m + (-1)^m}{2}$ \cite{DjBD1}.

\begin{df} \label{def:outlet} \cite{DjBD1}
The {\bf {\em outlet  word}} of  a vertex $\alpha \equiv \alpha_1 \alpha_2 \ldots \alpha_m \in V({\cal D}_{m})$ is  the  binary word
 $o(\alpha) \equiv o_1o_2 \ldots o_m$, where
  $ \ds o_j
\stackrel{\rm def}{=} \left \{
\begin{array}{cc}{}
0, & \; \; if \; \; \alpha_j \in \{ b, d, f \} \\
1, & \; \; if \; \; \alpha_j \in \{ a, c, e \}
\end{array}
 \right.
 ,  \; \; \; 1 \leq j \leq m.
$ \\
For a  binary word $v \equiv b_1b_2 \ldots b_{m-1}b_{m} \in \{ 0,1\}^m$,
$\ds \overline{v} \stackrel{\rm def}{=}  b_mb_{m-1}$ $ \ldots b_{2}b_{1}$.
\end{df}
\begin{exm}  \label{exm:1}
For the 2-factor    of $RG_7(8)$   depicted in  Figure~\ref{SSL} (a)
the outlet words for  the first three columns  are $1^501$, $1^401^2$  and $1^501$, respectively.
\end{exm}

To obtain the new  digraph \ $ {\cal D}^*_{m} \stackrel{\rm def}{=}(V({\cal D}^*_{m}), E({\cal D}^*_{m}))$ from $ {\cal D}_{m}$  we
glue  all  the vertices  with the same corresponding outlet word and replace all the
arcs starting from these glued vertices and ending
at the   same vertex with only one arc.

The following statements are  proved in  \cite{DjBD1}. Every binary word from $ \{ 0,1 \}^m$ except the word  $(01)^k0$ in case  $m=2k+1$ ($k \in N $) appears
as the outlet word of a vertex in $ {\cal D}_{m}$ and therefore belongs to  $V({\cal D}^*_{m})$.
 For $m \geq 2$  digraphs ${\cal D}^*_{m}$ are disconnected (a vertex, considered as an outlet word, with odd number of 1's  can not be in the same component with the one which has even number of 1's). Each  component of ${\cal D}^*_{m}$ is a strongly connected digraph, i.e.
its  adjacency matrix  ${\cal T}^*_{m}$  is a symmetric binary matrix.

\begin{thm}   \label{thm:stara1}(\cite{DjBD1}) \
If  $f_m^{RG}(n)$,  $f_{m}^{TkC}(n)$ and  $f_{m}^{MS}(n)$ ($ m \geq 2$) denote the number of 2-factors of $RG_{m}(n)$, $TkC_{m}(n)$ and  $MS_{m}(n)$, respectively, then \
$$ \ds
f_m^{RG}(n)=  a_{1,1}^{(n)}, $$
\bc
 $ \ds
f_{m}^{TkC}(n) = \ds \sum_{\begin{array}{c} v_i \in   V({\cal D}_{L,m}^*)\end{array} }  a_{i,i}^{(n)} $
 \ \  and \ \ $ \ds
f_{m}^{MS}(n) =
   \sum_{\begin{array}{c} v_i, v_j  \in   V({\cal D}_{L,m}^*) \\
\overline{v_i}= v_{j} \end{array} } a_{i,j}^{(n)} , $
\ec
 where      $v_1 \equiv 0^m$ (corresponding to the first row and first column of ${\cal T}^*_{m}$) and
 $ a_{i,j}^{(n)}$ denotes the $(i,j)$-entry of $n$-th power of ${\cal T}^*_{m}$.
 \end{thm}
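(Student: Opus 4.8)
The plan is to identify each 2-factor with a directed walk in the reduced digraph ${\cal D}^*_m$, so that the three enumeration problems turn into reading off entries of powers of its adjacency matrix ${\cal T}^*_m$. By Theorem~\ref{thm:stara} a 2-factor of a linear grid of width $m$ is exactly a code matrix whose columns are vertices of ${\cal D}_m$ and whose consecutive columns are joined by arcs of ${\cal D}_m$, the grid type affecting only the condition that closes the strip. First I would attach to each alpha-letter its four incidence bits (up, down, left, right), note that the outlet bit of Definition~\ref{def:outlet} is precisely the right bit, and introduce the complementary left-word $\ell(\alpha)$ recording the left bits. Inspecting ${\cal D}_{lr}$ shows that $(v_i,u_i)$ is an arc exactly when the right bit of $v_i$ equals the left bit of $u_i$; hence $v\to u$ is an arc of ${\cal D}_m$ if and only if $o(v)=\ell(u)$.

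The decisive step is a lemma that the reduction ${\cal D}_m\to{\cal D}^*_m$ loses no information: for each pair $(w,w')$ of binary words of length $m$ there is at most one vertex $u$ of ${\cal D}_m$ with $\ell(u)=w$ and $o(u)=w'$. I would prove this by rebuilding $u$ from top to bottom. Each position carries exactly two incident edges, so once its left and right bits are fixed its number of vertical edges is $2-w_i-w'_i$; the column condition of Theorem~\ref{thm:stara} makes the top position have no up-edge, and every ${\cal D}_{ud}$-arc forces the down-bit of a position to equal the up-bit of the position below it. Propagating these relations downward from the top determines all up/down bits uniquely --- the word being admissible exactly when every bit is $0$ or $1$ and the bottom position has no down-edge --- and the four incidence bits then fix each letter. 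Consequently every arc of ${\cal D}^*_m$ from $w$ to $w'$ carries a single column with $(\ell,o)=(w,w')$, so a walk $w_0\to w_1\to\cdots\to w_n$ in ${\cal D}^*_m$ corresponds bijectively to a sequence of $n$ columns $c_1,\dots,c_n$ with $\ell(c_j)=w_{j-1}$ and $o(c_j)=w_j$, i.e. to a code matrix whose internal column conditions and adjacency conditions already hold.

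It then remains to encode the closure of the strip as a constraint on the endpoints $w_0,w_n$. For $RG_m(n)$ the first column uses letters from $\{a,b,c\}$ and the last from $\{b,d,f\}$, i.e. $\ell(c_1)=0^m$ and $o(c_n)=0^m$; admissible code matrices are therefore the walks of length $n$ from $v_1=0^m$ to $v_1=0^m$, giving $f_m^{RG}(n)=a_{1,1}^{(n)}$. For $TkC_m(n)$, condition~3(b) of Theorem~\ref{thm:stara} reads $o(c_n)=\ell(c_1)$, so the walk is closed with common endpoint $w_0=w_n=v_i$, and summing over that endpoint yields $\sum_i a_{i,i}^{(n)}$. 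For $MS_m(n)$ I would first read condition~3(c) as saying that the upside-down flip $\overline{c_n}$ may precede $c_1$; since reflecting a column reverses the positions of its right bits without altering them, $o(\overline{c_n})=\overline{o(c_n)}$ in the sense of Definition~\ref{def:outlet}, so the closure becomes $\ell(c_1)=\overline{o(c_n)}$, i.e. $w_0=\overline{w_n}$. The walk thus runs between $v_i$ and $\overline{v_i}=v_j$; since the left--right mirror of a column is again a column, ${\cal T}^*_m$ is symmetric, these walks are counted by $a_{i,j}^{(n)}$ with $\overline{v_i}=v_j$, and summing gives the Moebius formula.

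The main obstacle is the uniqueness lemma of the second paragraph: the whole argument rests on the fact that fixing the left- and right-incidence words pins down a single admissible column, since that is what guarantees that gluing the vertices of ${\cal D}_m$ by outlet word neither identifies distinct transitions nor creates spurious multiplicities, so that walks in ${\cal D}^*_m$ and code matrices are in genuine bijection. The second delicate point is the Moebius bookkeeping, where the letter-reflection $\overline{\;\cdot\;}$, the reversal of row positions, and the binary-word reversal of Definition~\ref{def:outlet} must be kept consistent.
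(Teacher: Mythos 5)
There is no proof in this paper to compare yours against: Theorem~\ref{thm:stara1} is stated as an imported result from \cite{DjBD1}, so the assessment can only bear on your argument itself, which is correct and is the transfer-matrix argument such a result rests on. The crux you isolate is the right one: since an arc $v\to u$ of ${\cal D}_m$ holds exactly when $o(v)$ equals the left-incidence word $\ell(u)$, gluing by outlet words loses nothing precisely because a column is pinned down by the pair $(\ell(u),o(u))$; your downward propagation (top vertex has no up-edge, every vertex carries exactly two incident edges, a shared vertical edge is recorded identically by both endpoints) does establish that uniqueness, and it is what guarantees that walks in ${\cal D}^*_m$ and code matrices correspond bijectively rather than with multiplicities. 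Your translations of the three closure conditions are also the correct ones: $\ell(c_1)=0^m$ and $o(c_n)=0^m$ for $RG_m(n)$, giving $a^{(n)}_{1,1}$; $o(c_n)=\ell(c_1)$ for $TkC_m(n)$, giving the trace; and $\ell(c_1)=\overline{o(c_n)}$ for $MS_m(n)$, using that the up-down reflection preserves each letter's right-incidence bit while reversing positions. Two minor points: the symmetry of ${\cal T}^*_m$ is not actually needed for the Moebius count (walks from $v_i$ to $v_j=\overline{v_i}$ are counted by $a^{(n)}_{i,j}$ regardless), and strictly one should note that $\overline{w}$ is a vertex of ${\cal D}^*_m$ whenever $w$ is, which holds because the unique excluded binary word $(01)^k0$ (odd $m$) is a palindrome.
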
 \noindent

The computational  data for the digraphs ${\cal D}^*_m$ where $m \leq 12$ were  collected  in the same paper
and  pointed to the structure of ${\cal D}^*_{m}$ which is expressed in the following
theorem and  proved in \cite{DjDB2}.

\begin{thm}  \label{conj:1}  \cite{DjDB2}
For each  $m \geq 2 $,  the digraph  ${\cal D}^{*}_{m}$ has exactly $\ds \left\lfloor    \ds \frac{m}{2} \right\rfloor + 1$ components, i.e.
 $\ds {\cal D}^{*}_{m} = {\cal A}^*_{m}    \cup $ $\ds  (\bigcup_{s=1}^{\left\lfloor    \frac{m}{2} \right\rfloor }{\cal B}^{*(s)}_{m})$, where
 $\ds \mid V({\cal B}^{*(1)}_{m}) \mid  \geq
  \mid V({\cal B}^{*(2)}_{m}) \mid  \geq  \; \; \;  \ldots \; \; \;  \geq \mid V( {\cal B}^{*(\lfloor    m/2 \rfloor )}_{m}) \mid $
 and   ${\cal A}^*_{m} $ is the one containing $1^m$.
All the components  ${\cal B}^{*(s)}_{m}$ ($ 1 \leq s \leq \ds \left\lfloor    \ds \frac{m}{2} \right\rfloor $) are bipartite digraphs.
\\
 If  $m$ is  odd, then $\ds \mid V({\cal B}^{*(s)}_{m}) \mid  =\ds  {m + 1 \choose   (m+1)/2 -s} \mbox{ \  and \ } \ds \mid V({\cal A}^{*}_{m}) \mid =  \ds {m  \choose (m-1)/2}.$
\\
  If  $m$ is even, then $\ds \mid V({\cal B}^{*(s)}_{m}) \mid  = \ds 2 {m \choose   m/2 -s} \mbox{ \  and \  } \ds \mid V({\cal A}^{*}_{m}) \mid =  \ds {m \choose m/2}.$ \\
    The vertices $v$ and $\overline{v}$ belong to  the same component. When the component is bipartite they are placed in the same  class  if and only if  $m$ is odd.
\end{thm}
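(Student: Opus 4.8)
The plan is to set aside the grid-graph interpretation and to analyse $\mathcal{D}^{*}_{m}$ as an undirected graph on binary words, after extracting a local description of its arcs. Combining the column condition with the adjacency-of-columns condition, the first step is an arc characterization: for outlet words $L,R$ there is an arc $L\to R$ in $\mathcal{D}^{*}_{m}$ if and only if there are bits $D_{1},\dots,D_{m-1}\in\{0,1\}$, together with $D_{0}:=D_{m}:=0$, such that $D_{i-1}+D_{i}=2-L_{i}-R_{i}$ for all $1\le i\le m$. Indeed $D_{i}=1$ records that the intermediate column uses the vertical edge between rows $i$ and $i+1$, and the displayed equation merely says that the two $2$-factor edges at the $i$-th vertex split into horizontal incidences ($L_{i}+R_{i}$) and vertical incidences ($D_{i-1}+D_{i}$); the code letter is then forced. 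This description is symmetric in $L$ and $R$, which re-proves that $\mathcal{T}^{*}_{m}$ is symmetric and lets me work with the underlying undirected graph.

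The decisive invariant is the alternating sum $A(v):=\sum_{j=1}^{m}(-1)^{m-j}v_{j}$. Multiplying $D_{i-1}+D_{i}=2-L_{i}-R_{i}$ by $(-1)^{m-i}$ and summing, the $D$-terms telescope to the boundary and vanish because $D_{0}=D_{m}=0$, while the constant $2$ contributes $2\sum_{i}(-1)^{m-i}=1-(-1)^{m}$; hence $A(L)+A(R)=1-(-1)^{m}$ along every arc. Setting $c:=\tfrac12\bigl(1-(-1)^{m}\bigr)\in\{0,1\}$, each arc acts by $A\mapsto 2c-A$, so $\sigma(v):=|A(v)-c|$ is constant on connected components and every arc reverses the sign of $A(v)-c$. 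Counting level sets is now a Vandermonde computation: separating the odd- and even-indexed entries of $v$ gives $\#\{v:A(v)=t\}=\binom{m}{\lfloor m/2\rfloor+t}$, and summing over the two values $A\in\{c\pm s\}$ collapses, by Pascal's rule and the symmetry $\binom{m}{r}=\binom{m}{m-r}$, to $\binom{m}{(m-1)/2}$ for $s=0$ and $\binom{m+1}{(m+1)/2-s}$ for $s\ge1$ when $m$ is odd, and to $\binom{m}{m/2}$ and $2\binom{m}{m/2-s}$ when $m$ is even; these values decrease in $s$, giving the stated ordering. For odd $m$ the extreme value $\sigma=\lfloor m/2\rfloor+1$ is realized only by $(01)^{(m-1)/2}0$, which is exactly the word absent from $V(\mathcal{D}^{*}_{m})$, so the admissible values $\sigma\in\{0,\dots,\lfloor m/2\rfloor\}$ yield the asserted $\lfloor m/2\rfloor+1$ components, once each level set is shown to be connected.

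Proving that equal $\sigma$ forces being in one component is the main obstacle, because an arc coming from the $D$-characterization flips $L$ simultaneously at every row where $D$ changes value and so is highly non-local. I would prove connectivity by fixing, in each level set, a canonical word $w_{s}$ and showing that an arbitrary $v$ can be transported to $w_{\sigma(v)}$ by arcs. The basic move is supplied by the $D$-sequences having a single block of $1$'s: such a $D$ flips $v$ in exactly two rows $p<q$, and is available precisely when $v$ equals $1$ outside $[p,q]$ and equals $0$ strictly between $p$ and $q$. Composing these elementary flips — and controlling the process by a monovariant, for instance the Hamming distance to $w_{\sigma(v)}$ or a count of misplaced descents — should drive every word to its canonical representative without leaving its level set, whence connectivity. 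Establishing that such a monovariant strictly decreases at each step, and in particular that the required surrounding pattern can always be arranged, is the technically heaviest part of the argument, and is where I expect the real work to lie.

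Bipartiteness and the reflection statement then follow from the same invariant. For $s\ge1$ every arc exchanges the classes $\{A=c+s\}$ and $\{A=c-s\}$, so these are the colour classes exhibiting $\mathcal{B}^{*(s)}_{m}$ as bipartite; for $s=0$ the word $1^{m}$ carries a loop — it is the unique fixed point of the arc relation, since $L=R$ forces $D\equiv0$ and hence $v=1^{m}$ — so $\mathcal{A}^{*}_{m}$ is the (non-bipartite) component containing $1^{m}$. Finally a reindexing of the alternating sum gives $A(\overline{v})=(-1)^{m-1}A(v)$, so $A(\overline{v})-c$ has the same sign as $A(v)-c$ precisely when $m$ is odd; this places $v$ and $\overline{v}$ in the same colour class if and only if $m$ is odd, completing the statement.
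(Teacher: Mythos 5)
Your "easy half" is correct, and it is in substance the same invariant the cited proof in \cite{DjDB2} uses: for odd $m$ one checks $A(v)-c=-Z(v)$ and for even $m$ one checks $A(v)-c=Z(v)$, so your $\sigma(v)=|A(v)-c|$ is exactly $|Z(v)|$, and your level sets are exactly the sets $S_m^{(s)}$ of the paper. Your arc characterization via the vertical-edge bits $D_i$, the telescoping identity $A(L)+A(R)=2c$ along arcs, the Vandermonde/Pascal evaluation of the level-set sizes, the loop at $1^m$ (unique fixed point of the arc relation), the bipartition of level $s\geq 1$ into $\{A=c+s\}$ and $\{A=c-s\}$, and the formula $A(\overline{v})=(-1)^{m-1}A(v)$ are all verifiably correct. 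But all of this only shows that every component of $\mathcal{D}^*_m$ is \emph{contained in} a single level set: it gives at least $\lfloor m/2\rfloor+1$ components and upper bounds on their orders. The theorem needs the converse — that each level set induces a connected subdigraph — and that is precisely the step you defer ("where I expect the real work to lie"). It is also precisely the substance of what \cite{DjDB2} proves, namely that the induced subdigraphs $\langle S_m^{(s)}\rangle_{\mathcal{D}^*_m}$ \emph{are} the components. Without it, the component count, the identification of the orders $|V(\mathcal{B}^{*(s)}_m)|$ and $|V(\mathcal{A}^*_m)|$, and the claim that $v$ and $\overline{v}$ share a component all remain unproven.

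Moreover, the connectivity plan you sketch cannot work as stated. Your elementary move (a single block of 1's in $D$) exists at a word $v$ only when $v$ equals $1$ outside some interval $[p,q]$ and $0$ strictly inside it, and by the symmetry of the arc relation the same structural restriction applies to arcs \emph{entering} $v$. Take $m=5$ and $v=01100$, a vertex of level $\sigma=1$ (indeed of $\mathcal{R}^*_5$): its zeros sit at positions $1,4,5$, which forces $[p,q]=[1,5]$, but then positions $2,3$ would have to be $0$; so no single-block arc leaves or enters $v$ at all. Any path out of $v$ must begin with a multi-block arc, e.g.\ the two intervals $[1,2]$ and $[4,5]$ give $01100\to 10111$. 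Hence a correct connectivity argument must manipulate systems of several disjoint intervals simultaneously, and no monovariant controlling such moves is exhibited in your sketch. Until that argument is supplied, your proposal establishes only half of the theorem.
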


The component of ${\cal D}_{m}$ which contains all the possible columns of code matrices for $RG_m(n)$   ($m \geq 2$) is denoted by ${\cal R}_m$.
The component of ${\cal D}^{*}_{m}$ asigned to ${\cal R}_m$ is denoted  by ${\cal R}^{*}_m$. It contains the vertex $0^m$ and it is sufficient for counting  2-factors of $RG_m(n)$.
Using the  property of reflection symmetry we have that $ \overline{v} \in {\cal R}^*_{m}$ for every vertex  $v  \in {\cal R}^*_{m}$.
Since $\overline{0 0 \ldots 0} = 0 0 \ldots 0 \in {\cal R}^*_{m}$ further reduction of the graph ${\cal R}^{*}_m$ is possible.
By  gluing the vertices $v$ and $\overline{v}$   into one vertex for all $v \in V({\cal R}^*_m)$ the set of vertices of new  digraph ${\cal R}_m^{**}$ is obtained.
During this process the  arcs starting from just one of these two vertices are retained.
Multiple (double) arcs appear  when $v$ and $\overline{v}$ have a  common direct predecessor.
In this way, we obtain a lower-order transfer matrix.

\begin{thm}  \label{thm:contraction} \cite{DjBD1} \\ The number
$f_m^{RG}(n)$ is equal to entry $a_{1,1}^{(n)}$ of the $n$-th power of the adjacency matrix for  ${\cal R}^{**}_m$ where   $v_1 \equiv 0^m$.
\end{thm}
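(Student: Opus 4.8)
The plan is to deduce the statement from Theorem~\ref{thm:stara1} by a folding argument along the reflection symmetry $v \mapsto \overline{v}$. Write $A$ for the (symmetric, $0/1$) adjacency matrix of the component ${\cal R}^*_m$, so that $(A^n)_{0^m,0^m} = ({\cal T}^{*\,n}_m)_{0^m,0^m}$ because closed walks at $0^m$ never leave its component. By Theorem~\ref{thm:stara1} we already know $f_m^{RG}(n) = (A^n)_{0^m,0^m}$, so it suffices to prove that the adjacency matrix $B$ of ${\cal R}^{**}_m$ satisfies $(B^n)_{[0^m],[0^m]} = (A^n)_{0^m,0^m}$, where $[0^m]$ is the vertex obtained by gluing $0^m$ with $\overline{0^m}=0^m$. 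Since $0^m$ is a palindrome this glued vertex is the singleton $\{0^m\}$ and carries index $1$.

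First I would record the two structural facts that drive everything. The map $\sigma\colon v \mapsto \overline{v}$ is an involutive automorphism of ${\cal R}^*_m$: this is exactly the reflection symmetry already used in the construction (if $u$ may precede $v$ as columns of a code matrix, then so may $\overline{u}$ precede $\overline{v}$), so $A_{\overline{u},\overline{v}} = A_{u,v}$ for all $u,v$, and $\sigma$ fixes $0^m$. Hence $A$ commutes with the permutation matrix of $\sigma$ and preserves the subspace $S = \{x : \sigma x = x\}$ of $\sigma$-symmetric vectors, which is indexed by the orbits $\{v,\overline v\}$; moreover $e_{0^m}\in S$ because $0^m$ is a fixed point.

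Next I would fold $A$ onto $S$ in the sum basis. For a two-element orbit put $g_{[v]} = e_v + e_{\overline v}$, and for a fixed point $v=\overline v$ put $g_{[v]} = e_v$; these form a basis of $S$. Using $A_{\overline u,\overline v}=A_{u,v}$ one computes $A\,g_{[w]} = \sum_{[v]} M_{[v],[w]}\,g_{[v]}$ with $M_{[v],[w]} = A_{v,w}$ when $w$ is a fixed point and $M_{[v],[w]} = A_{v,w}+A_{v,\overline w}$ when $w$ lies in a two-element orbit; in both cases the coefficients on $e_v$ and $e_{\overline v}$ come out equal, which is precisely what keeps $A\,g_{[w]}$ inside $S$. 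Because $0^m$ is a fixed point we have $g_{[0^m]} = e_{0^m}$, and $e_{0^m}$ occurs in no other basis vector, whence $(A^n)_{0^m,0^m} = e_{0^m}^{\top} A^n e_{0^m} = (M^n)_{[0^m],[0^m]}$.

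It then remains to identify $M$ with $B$. Reading off the construction, the number of arcs of ${\cal R}^{**}_m$ from $[w]$ to $[v]$ is the number of $u\in\{v,\overline v\}$ reachable in ${\cal R}^*_m$ from the retained representative of $[w]$, i.e. $B_{[v],[w]} = A_{v,w}$ when $v$ is a fixed point and $A_{v,w}+A_{v,\overline w}$ when $v$ lies in a two-element orbit (the second term being exactly the double arc created when $w$ is a common predecessor of $v$ and $\overline v$). Invoking $A=A^{\top}$ and $\sigma$-invariance gives $B_{[w],[v]} = M_{[v],[w]}$, that is $B = M^{\top}$, so $(B^n)_{[0^m],[0^m]} = (M^n)_{[0^m],[0^m]} = (A^n)_{0^m,0^m} = f_m^{RG}(n)$. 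The one delicate point is this transpose bookkeeping: the sum-folding doubles arcs according to the source orbit whereas the combinatorial description of ${\cal R}^{**}_m$ doubles them according to the target orbit, and the two conventions agree only after transposition. The reason the $(1,1)$-entry survives the reduction unharmed is that $0^m$ is a palindrome, so its orbit is a singleton and $e_{0^m}$ already lies in $S$; for a starting vertex not fixed by $\sigma$ no such clean identity would hold. An essentially equivalent, more hands-on route would be to unfold each closed walk at $[0^m]$ in ${\cal R}^{**}_m$ into a closed walk at $0^m$ in ${\cal R}^*_m$ by carrying a running reflection $\varepsilon_j\in\{\mathrm{id},\sigma\}$ that is reset whenever the walk is read back to a chosen representative, the fixed-point endpoints making this a bijection.
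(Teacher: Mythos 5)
Your argument is correct, but be aware that the paper offers no proof of this statement to compare against: Theorem~\ref{thm:contraction} is quoted verbatim from \cite{DjBD1}, and the present paper only recalls the gluing construction of ${\cal R}^{**}_m$ in the preceding paragraph. So your folding argument is not a variant of the paper's proof --- it is a proof where the paper has a black-box citation. Checking it on its merits: the reflection map $\sigma$ is indeed an arc-preserving involution of ${\cal R}^*_m$ fixing $0^m$ (the same symmetry the paper invokes to justify $\overline{v}\in{\cal R}^*_m$), the symmetry $A=A^\top$ is stated in the paper just before Theorem~\ref{thm:stara1}, the computation of the folded matrix $M$ on the $\sigma$-symmetric subspace is right, and the extraction $(A^n)_{0^m,0^m}=(M^n)_{[0^m],[0^m]}$ works exactly because $0^m$ is a palindrome, so $g_{[0^m]}=e_{0^m}$ --- you correctly flag that this is where the hypothesis $v_1\equiv 0^m$ enters.

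One remark on your ``delicate point'': the transpose is purely an artifact of your indexing convention $B_{[v],[w]}:=\#\{\mbox{arcs } [w]\to[v]\}$, which is itself the transpose of the usual adjacency convention. With rows indexing sources, the folded matrix $M$ is \emph{literally} the adjacency matrix of ${\cal R}^{**}_m$: for $|[w]|=2$ one has $M_{[v],[w]}=A_{v,w}+A_{v,\overline{w}}$, which (using $A=A^\top$ and $\sigma$-invariance) is exactly the number of arcs from the representative $v$ into the glued vertex $[w]$, doubled precisely when $v$ is a common predecessor of $w$ and $\overline{w}$. So both the algebraic folding and the combinatorial gluing attach the doubling to the same index, and no transpose is actually needed; your statement that one doubles by ``source orbit'' and the other by ``target orbit'' is an inaccuracy in the commentary, not in the equations. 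In any case the discrepancy is immaterial, since only the diagonal entry is used and $(X^n)_{1,1}=\bigl((X^\top)^n\bigr)_{1,1}$ for any square matrix $X$, so the conclusion $f_m^{RG}(n)=a_{1,1}^{(n)}$ stands.
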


For even $m$, it is proved in  \cite{DjBD1} (Theorem 3.18) that  all palindromes from $V({\cal D}^{*}_m)$ belong to the component ${\cal A}^{*}_m$ which coincides with ${\cal R}_m^*$.
Consequently, using    Theorem~\ref{conj:1}
we have
 \be \label{e1} \mid V({\cal R}^{*}_{m}) \mid = {m \choose m/2 }  \ee  while   the number of vertices in    digraph ${\cal R}^{**}_{m}$ is equal  to  \mbox{  A005317 in OEIS}, i.e.
 \be \label{e2} \mid  V({\cal R}^{**}_{m}) \mid = 2^{(m-2)/2}+ \frac{1}{2}{m \choose m/2}. \ee
The equality (\ref{e1}) and (\ref{e2}) are a part of Conjecture 4.3 in  \cite{DjBD1}.

The  aim of  this paper is the first proof of  the remaining part of this conjecture and of Conjecture 4.2 in  \cite{DjBD1} which refer to the case when $m$ is odd.
We will prove
\begin{thm}  \label{cor:2} (MAIN THEOREM)
 For $m-odd$, the number of vertices in    ${\cal R}^{*}_{m}$ is equal  to the  binomial coefficient  (in OEIS A001791):
 \begin{eqnarray} \label{e0} \mid V({\cal R}^{*}_{m}) \mid =  {m+1 \choose (m-1)/2 }  \end{eqnarray}
    while   the number of vertices in    digraph ${\cal R}^{**}_{m}$ is equal  to
    \begin{eqnarray} \label{e4} \mid  V({\cal R}^{**}_{m}) \mid =  \frac{1}{2} \left[{m+1 \choose (m-1)/2 } + {(m+1)/2  \choose \lfloor (m+1)/4  \rfloor}\right]. \end{eqnarray}
   \end{thm}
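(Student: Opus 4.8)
The plan is to reduce the statement to an orbit count under the reflection involution and then to evaluate the two resulting quantities separately. Since $\overline{0^m}=0^m\in{\cal R}^{*}_m$ and $\overline{v}\in{\cal R}^{*}_m$ whenever $v\in{\cal R}^{*}_m$, the map $v\mapsto\overline{v}$ is an involution on $V({\cal R}^{*}_m)$, and by construction ${\cal R}^{**}_m$ is obtained by identifying each $v$ with $\overline{v}$ (the multiplicities of arcs are irrelevant for the vertex count). Hence every $2$-element orbit $\{v,\overline v\}$ and every fixed point $v=\overline v$ contributes exactly one vertex of ${\cal R}^{**}_m$, so
\[
\mid V({\cal R}^{**}_m)\mid \; = \; \frac{1}{2}\left(\mid V({\cal R}^{*}_m)\mid + P_m\right),
\]
where $P_m$ denotes the number of \emph{palindromic} vertices of ${\cal R}^{*}_m$, i.e.\ those $v$ with $v=\overline v$. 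Thus it suffices to establish (\ref{e0}) and to prove that $P_m={(m+1)/2 \choose \lfloor (m+1)/4\rfloor}$; substituting both into the displayed identity yields (\ref{e4}).

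For (\ref{e0}) I would identify ${\cal R}^{*}_m$ with the largest bipartite component ${\cal B}^{*(1)}_m$. The word $0^m$ has an even number of $1$'s, so by the parity invariant it cannot lie in ${\cal A}^{*}_m$ (which contains $1^m$, having the odd number $m$ of $1$'s). Appealing to the explicit description of the components from the proof of Theorem~\ref{conj:1} in \cite{DjDB2}, I would check that $0^m$ falls in the component of size ${m+1 \choose (m+1)/2-1}$, that is ${\cal R}^{*}_m={\cal B}^{*(1)}_m$; Theorem~\ref{conj:1} then gives $\mid V({\cal R}^{*}_m)\mid={m+1 \choose (m-1)/2}$, which is exactly (\ref{e0}).

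The heart of the argument is the evaluation of $P_m$. A palindrome of odd length $m$ is determined by its initial $(m+1)/2$ letters $b_1b_2\ldots b_{(m+1)/2}$ (with $b_i=b_{m+1-i}$). I would feed such a half-word into the parametrization of $V({\cal R}^{*}_m)$ coming from \cite{DjDB2} and show that the membership condition $v\in{\cal R}^{*}_m$, once specialized to palindromes, collapses to a single one-sided constraint on the half-word: reading each letter as a $\pm1$ step, the associated lattice path of length $(m+1)/2$ must not leave the allowed half-plane. The number of such half-words is the classical count of nonnegative $\pm1$ paths of length $(m+1)/2$, namely the central binomial coefficient ${(m+1)/2 \choose \lfloor (m+1)/4\rfloor}$, which gives $P_m$. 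I expect this translation --- turning the two-sided component-membership condition into a clean one-sided path condition on the half-word, and pinning down precisely which boundary case (the analogue of the excluded word $(01)^{k}0$) is discarded --- to be the main obstacle; the orbit count and the size (\ref{e0}) are comparatively routine once the component containing $0^m$ has been located.
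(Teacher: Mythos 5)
Your skeleton coincides with the paper's: the orbit count $\mid V({\cal R}^{**}_{m})\mid=\frac{1}{2}\left(\mid V({\cal R}^{*}_{m})\mid+P_m\right)$ is exactly how the paper assembles \eqref{e4}, and identifying ${\cal R}^{*}_{m}$ with ${\cal B}^{*(1)}_{m}$ is its Lemma~\ref{lem1}. But your route to the two ingredients is not sound. For the identification, the parity-of-ones argument only rules out ${\cal A}^{*}_{m}$: when $m$ is odd, the vertices of ${\cal B}^{*(3)}_{m},{\cal B}^{*(5)}_{m},\ldots$ also have an even number of $1$'s, so parity cannot single out ${\cal B}^{*(1)}_{m}$, and your ``I would check'' defers precisely the needed computation. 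The fix is one line using the invariant from \cite{DjDB2}: the components of ${\cal D}^{*}_{m}$ are the sets $S_{m}^{(s)}$ determined by $Z(x)=odd(x)-even(x)$, and $Z(0^m)=(m+1)/2-(m-1)/2=1$, so $0^m\in S_{m}^{(1)}=V({\cal B}^{*(1)}_{m})$.

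The genuine gap is in your evaluation of $P_m$. Membership of a palindrome in ${\cal B}^{*(1)}_{m}$ is an \emph{endpoint} condition on the statistic $Z$, not a one-sided ``never leave the half-plane'' condition on the half-word. Writing $m=2k+1$ and a palindrome as $wc\overline{w}$ with middle letter $c$, one computes that $c=1$ forces $Z(wc\overline{w})=2Z(w)$, which is even and hence never $\pm 1$; so $c=0$ is forced, and then $Z(w0\overline{w})=2Z(w)+1$ for $k$ even and $2Z(w)-1$ for $k$ odd, so the membership condition $\mid Z\mid=1$ becomes exactly $Z(w)\in\{-1,0\}$ (resp. $\{0,1\}$) --- a constraint on the final value of $Z$ only, with no requirement on partial sums. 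Your lattice-path positivity criterion describes a different set of half-words that merely happens to be equinumerous. Concretely, for $m=3$ the palindromes in ${\cal B}^{*(1)}_{3}$ are $000$ and $101$ (see Example~\ref{exm:3}), i.e.\ half-words $00$ and $10$; reading $0$ as the $+1$ step your condition selects $\{00,01\}$, reading $1$ as $+1$ it selects $\{10,11\}$ --- neither equals $\{00,10\}$, and $01$ corresponds to $010$, which is not even a vertex of ${\cal D}^{*}_{3}$. So the step you yourself flag as the main obstacle fails as stated. The repair is what the paper's Lemma~\ref{lem2} does: force the middle letter to $0$, reduce to the endpoint condition on $Z(w)$, and count $\mid G_{k}^{(1)}\mid+\mid S_{k}^{(0)}\mid$ (resp.\ $\mid S_{k}^{(0)}\mid+\mid R_{k}^{(1)}\mid$) via Lemma~\ref{lem0} and Pascal's rule, obtaining ${(m+1)/2 \choose \lfloor (m+1)/4 \rfloor}$.
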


\newpage
\begin{center}
{\bf 2. PROOF OF  THE MAIN THEOREM}
\end{center}

We need two definitions.

\begin{df} \label{def:ch} \cite{DjDB2}
For a binary word $x $ of length $m$ ($ m \in N$) we denote by $odd(x)$ ($even(x)$) the total number of 0's at odd (even) positions in $x$.
The difference  $odd(x) - even(x)$ is labeled as $Z(x)$.
\end{df}

\begin{df} \label{def:ch} \cite{DjDB2}
The set  $S_{m}^{(0)}$ ($m \in N$) consists of all the  binary $m$-words whose number of 0's at odd positions is equal to the  number of 0's at even  positions.
For  $1 \leq s \leq \lfloor m/2 \rfloor$,  $S_m^{(s)}\stackrel{\rm def}{=}  R_m^{(s)} \cup  G_m^{(s)}$  where
the words in   $R_{m}^{(s)}$  and $G_{m}^{(s)}$ are all the  binary words $x$ of the length $m$
for which  $ Z(x)  = s$ and  $ Z(x) = -s$, respectively. Additionally, if  $m$ is odd, then $R_m^{(\lceil m/2 \rceil)}\stackrel{\rm def}{=}\{ 0 (10)^{\lfloor m/2 \rfloor} \}$.
\end{df}

\begin{exm}  \label{exm:3}
It is easy to check that $ S_1^{(0)} = \{1 \}$, $R_1^{(1)} = \{0 \}$,
$S_2^{(0)}  = \{ 00, 11 \}$, $R_2^{(1)} =\{ 01 \}$,  $ G_2^{(1)}  =\{ 10 \} $,
$ S_3^{(0)} =\{ 100, 111, 001  \},   R_3^{(1)}=\{ 000, 011, 110 \},  G_3^{(1)}= \{ 101 \}, R_3^{(2)}  = \{010 \}$.
 \end{exm}

Note that $\ds \bigcup_{s=0}^{\lfloor m/2 \rfloor} S_m^{(s)}  =  V({\cal D}_{m}^*)$, where $ V({\cal D}_{m}^*) = \{ 0,1\}^m$ for $m$-even, and
 $ V({\cal D}_{m}^*) = \{ 0,1\}^m \backslash R_m^{(\lceil m/2 \rceil)} = \{ 0,1\}^m \backslash \{ 0 (10)^{\lfloor m/2 \rfloor} \}$ for $m$-odd.
In \cite{DjDB2}, it is proved that the subdigraphs of ${\cal D}^{*}_{m}$ induced  by the sets  $S_{m}^{(s)}$ ($0 \leq s \leq \lfloor m/2 \rfloor$)
are its  components. To be more accurate, $ \langle S_{m}^{(0)} \rangle_{{\cal D}^{*}_{m}} = {\cal A}^*_{m}$  and  $ {\cal B}^{*(s)}_{m}  = \langle S_{m}^{(s)} \rangle_{{\cal D}^{*}_{m}} $,  where
$ R_m^{(s)} $ and $G_m^{(s)}$ are the  classes of the bipartite digraph $ {\cal B}^{*(s)}_{m} $  ($1 \leq s \leq \lfloor m/2 \rfloor$).  The vertices  from $ R_m^{(s)} $ and $G_m^{(s)}$ ($1 \leq s \leq \lfloor m/2 \rfloor$) are called \textbf{ red } and \textbf{ green } ones, respectively.

\begin{lem}   \label{lem0} \cite{DjDB2} If $1 \leq s \leq k $ ($k \in N$), then
\\ $  \ds \mbox{ a)} \ds \mid R^{(s)}_{2k} \mid  = \mid G^{(s)}_{2k} \mid = \ds    {2k \choose  k-s}, \mbox{ \ }
\ds \mid S^{(s)}_{2k} \mid  = 2 \cdot {2k \choose  k-s} $,
$\ds \mid S^{(0)}_{2k} \mid = \ds  {2k \choose   k}. $
\\
\\ $ \mbox{ b)} \ds \mid R^{(s)}_{2k+1} \mid  =\ds  {2k +1 \choose   k-s+1}$,   $\ds \mid G^{(s)}_{2k+1} \mid = \ds  {2k +1 \choose   k-s}, $
$\ds \mid S^{(s)}_{2k+1} \mid = \ds  {2k +2 \choose   k-s+1} $, $ \ds \mid S^{(0)}_{2k+1} \mid  = \ds  {2k+1 \choose   k}.$
\end{lem}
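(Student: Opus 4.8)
The plan is to prove all six cardinality formulas by direct enumeration, classifying each binary $m$-word $x$ according to the pair $(p,q) = (odd(x),even(x))$ and evaluating the resulting binomial sums with Vandermonde's convolution. First I would record the elementary fact that a word of length $m$ has $n_o = \lceil m/2 \rceil$ odd positions and $n_e = \lfloor m/2 \rfloor$ even positions, and that the number of words having exactly $p$ zeros among the odd positions and $q$ zeros among the even positions is $\binom{n_o}{p}\binom{n_e}{q}$, the two choices being independent. Since $Z(x) = p-q$, each set in the statement is a disjoint union of such classes taken over the appropriate diagonal $p - q = \text{const}$, so every cardinality becomes a single sum of products of two binomial coefficients.

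For the even case $m = 2k$ (so $n_o = n_e = k$), writing $R^{(s)}_{2k}$ as the words with $p - q = s$ gives $|R^{(s)}_{2k}| = \sum_q \binom{k}{q+s}\binom{k}{q}$; rewriting $\binom{k}{q} = \binom{k}{k-q}$ and applying Vandermonde's identity $\sum_j \binom{a}{j}\binom{b}{c-j} = \binom{a+b}{c}$ yields $\binom{2k}{k+s} = \binom{2k}{k-s}$. The equality $|G^{(s)}_{2k}| = |R^{(s)}_{2k}|$ then follows from the symmetry that exchanges odd and even positions, which swaps $p$ and $q$ and hence sends $Z$ to $-Z$; consequently $|S^{(s)}_{2k}| = |R^{(s)}_{2k}| + |G^{(s)}_{2k}| = 2\binom{2k}{k-s}$, the union being disjoint for $s \geq 1$. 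The case $s = 0$ is the diagonal $p = q$, giving the central identity $\sum_q \binom{k}{q}^2 = \binom{2k}{k}$.

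For the odd case $m = 2k+1$ the same method applies, but now $n_o = k+1$ while $n_e = k$, so the odd/even symmetry is broken; this asymmetry is exactly what produces the two distinct values for $R$ and $G$. I would compute $|R^{(s)}_{2k+1}| = \sum_q \binom{k+1}{q+s}\binom{k}{q} = \binom{2k+1}{k+s} = \binom{2k+1}{k-s+1}$ and $|G^{(s)}_{2k+1}| = \sum_p \binom{k+1}{p}\binom{k}{p+s} = \binom{2k+1}{k+1+s} = \binom{2k+1}{k-s}$, each step being Vandermonde followed by the symmetry $\binom{n}{j} = \binom{n}{n-j}$. Adding these disjoint contributions and invoking Pascal's rule gives $|S^{(s)}_{2k+1}| = \binom{2k+1}{k-s+1} + \binom{2k+1}{k-s} = \binom{2k+2}{k-s+1}$, while the diagonal $p = q$ yields $|S^{(0)}_{2k+1}| = \sum_q \binom{k+1}{q}\binom{k}{q} = \binom{2k+1}{k}$.

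I do not expect a genuine obstacle here; the content is a careful but routine double count, and the Example already confirms the small cases. The one place requiring attention — and the natural main step — is the index bookkeeping in the odd case: tracking the off-by-one between the $k+1$ odd positions and the $k$ even positions, and correctly applying $\binom{2k+1}{k+s} = \binom{2k+1}{k-s+1}$ and $\binom{2k+1}{k+1+s} = \binom{2k+1}{k-s}$ to bring the answers into the stated form. I would also note explicitly that the extremal word $0(10)^{k}$, which has $Z = k+1 = \lceil m/2 \rceil$ and is split off into $R_m^{(\lceil m/2 \rceil)}$, lies outside the range $1 \le s \le k$ and hence does not interfere with any of the formulas above.
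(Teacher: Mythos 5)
Your proof is correct in every detail: the classification of words by the pair $(odd(x),even(x))$, the count $\binom{n_o}{p}\binom{n_e}{q}$ with $n_o=\lceil m/2\rceil$, $n_e=\lfloor m/2\rfloor$, the Vandermonde evaluations (including the index shifts $\binom{2k+1}{k+s}=\binom{2k+1}{k-s+1}$ and $\binom{2k+1}{k+1+s}=\binom{2k+1}{k-s}$), the Pascal step for $|S^{(s)}_{2k+1}|$, and the observation that the exceptional word $0(10)^k$ with $Z=k+1$ lies outside the range $1\le s\le k$ all check out. One point of comparison is moot, however: this paper contains no proof of the lemma at all --- it is imported verbatim from the earlier paper \cite{DjDB2}, so there is no in-paper argument to measure yours against. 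Your double count via diagonals $p-q=\mathrm{const}$ is the natural and standard route to these formulas (and is surely close in spirit to the cited source); as a self-contained verification it is complete, and the asymmetry $n_o=k+1$, $n_e=k$ in the odd case correctly explains why $|R^{(s)}_{2k+1}|\neq|G^{(s)}_{2k+1}|$ there, while the reversal symmetry forces $|R^{(s)}_{2k}|=|G^{(s)}_{2k}|$ in the even case.
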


The following two lemmas are  proposed in \cite{DjBD1} as Conjecture 4.2.

\begin{lem}  \label{lem1}
 For  odd $m$,  ${\cal R}^{*}_{m} \equiv  {\cal B}^{*(1)}_{m}$.
 \end{lem}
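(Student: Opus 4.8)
The goal of Lemma~\ref{lem1} is to identify the component ${\cal R}^{*}_{m}$ (the one containing $0^m$, relevant for rectangular grids) with the largest bipartite component ${\cal B}^{*(1)}_{m}$ when $m$ is odd. The plan is to verify two things: first, that $0^m$ indeed lies in the component ${\cal B}^{*(1)}_{m}$, and second, that this forces ${\cal R}^{*}_{m}$ to coincide with ${\cal B}^{*(1)}_{m}$ as whole components (not merely as subgraphs). The second point is immediate once the first is established, because by definition ${\cal R}^{*}_{m}$ is \emph{the} component of ${\cal D}^{*}_{m}$ containing $0^m$, and by Theorem~\ref{conj:1} (as refined in the discussion following Lemma~\ref{lem0}) the components of ${\cal D}^{*}_{m}$ are exactly the induced subdigraphs $\langle S_{m}^{(s)}\rangle$; two components either coincide or are disjoint. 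So the entire content reduces to computing which $S_{m}^{(s)}$ contains the all-zeros word.

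First I would compute $Z(0^m)$ directly from Definition~\ref{def:ch}. For $m=2k+1$ odd, the word $0^m$ has a $0$ in every one of its $m$ positions; the odd positions are $1,3,\dots,2k+1$, giving $odd(0^m)=k+1$, while the even positions $2,4,\dots,2k$ give $even(0^m)=k$. Hence $Z(0^m) = (k+1) - k = 1$, which is strictly positive. By the definition of the sets $R_m^{(s)}$, a word $x$ with $Z(x)=s>0$ lies in $R_m^{(s)}$; with $s=1$ this places $0^m$ in $R_m^{(1)} \subseteq S_m^{(1)}$. Therefore $0^m \in S_m^{(1)}$, and since $\langle S_m^{(1)}\rangle_{{\cal D}^{*}_{m}} = {\cal B}^{*(1)}_{m}$, the component containing $0^m$ is precisely ${\cal B}^{*(1)}_{m}$. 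This yields ${\cal R}^{*}_{m} \equiv {\cal B}^{*(1)}_{m}$.

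I should also confirm the indexing is consistent with Theorem~\ref{conj:1}, namely that the component index $s=1$ corresponds to the \emph{largest} of the ${\cal B}^{*(s)}_{m}$, which is the one singled out as ${\cal B}^{*(1)}_{m}$. From the odd-$m$ formula $\mid V({\cal B}^{*(s)}_{m})\mid = {m+1 \choose (m+1)/2 - s}$, the value $s=1$ gives ${m+1 \choose (m-1)/2}$, and this is decreasing in $s$ over the range $1 \le s \le \lfloor m/2\rfloor$ since the lower index $(m+1)/2 - s$ moves away from the central value $(m+1)/2$ as $s$ grows. Thus $s=1$ genuinely indexes the largest component, matching the ordering convention $\mid V({\cal B}^{*(1)}_{m})\mid \ge \mid V({\cal B}^{*(2)}_{m})\mid \ge \cdots$ in Theorem~\ref{conj:1}. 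As a consistency check, this immediately gives $\mid V({\cal R}^{*}_{m})\mid = {m+1 \choose (m-1)/2}$, which is exactly equation~(\ref{e0}) of the Main Theorem — so Lemma~\ref{lem1} already delivers the first half of the target count.

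The argument is essentially a one-line parity computation, so I do not anticipate a serious obstacle; the only place demanding care is bookkeeping of the conventions. Specifically, I must make sure that $Z(0^m)=+1$ lands in the \emph{red} class $R_m^{(1)}$ rather than the green class $G_m^{(1)}$ (the sign convention in Definition~\ref{def:ch} assigns $Z(x)=s$ to red and $Z(x)=-s$ to green), and that no off-by-one slip occurs in counting odd versus even positions for odd length $m$. Once these sign and index conventions are pinned down, the identification ${\cal R}^{*}_{m} \equiv {\cal B}^{*(1)}_{m}$ follows with no further work.
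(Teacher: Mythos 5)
Your proposal is correct and follows essentially the same route as the paper: compute $Z(0^m)=1$ for odd $m$, conclude $0^m \in S_m^{(1)}$ so that the component containing $0^m$ is $\langle S_m^{(1)}\rangle_{{\cal D}^{*}_{m}} = {\cal B}^{*(1)}_{m}$, and invoke the definition of ${\cal R}^{*}_{m}$ as the component containing $0^m$. Your explicit position count ($odd(0^m)=k+1$, $even(0^m)=k$) and the check that $s=1$ indexes the largest component are just fuller bookkeeping of what the paper's one-line proof leaves implicit.
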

\begin{proof} Since  $m$ is odd, we have  $Z(0^m) = 1$, which implies  $0^m \in  V( {\cal B}^{*(1)}_{m})$.  Consequently,  ${\cal R}^{*}_{m}\equiv  {\cal B}^{*(1)}_{m}$. $\Box$ \
\end{proof}

\begin{lem}  \label{lem2}
 For  odd $m$, the number of all palindromes from $ V( {\cal B}^{*(1)}_{m})$ is equal to $\ds {(m+1)/2  \choose \lfloor (m+1)/4  \rfloor}$.
 \end{lem}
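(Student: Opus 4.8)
The plan is to parametrize the palindromes in $V({\cal B}^{*(1)}_m)$ by their first half and to convert the condition $|Z|=1$ into a condition on that half. Write $m=2k+1$, so a palindrome $x=x_1x_2\ldots x_m$ is determined by $x_1,\ldots,x_{k+1}$ with $x_{m+1-i}=x_i$, its central letter being $x_{k+1}$. The first thing I would record is that reversal of an odd-length word preserves the parity of every position, since $i$ and $m+1-i=2k+2-i$ always have the same parity; hence the two letters of each mirrored pair $\{x_i,x_{2k+2-i}\}$ sit at positions of equal parity. Consequently every off-centre $0$ contributes an even amount to both $odd(x)$ and $even(x)$, so $Z(x)$ is congruent modulo $2$ to the total number of $0$'s, which in turn is congruent to $[x_{k+1}=0]$. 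Since $|Z(x)|=1$ is odd, this forces the central letter $x_{k+1}$ to be $0$; this is the cleanest way to discard at once all palindromes with a central $1$.

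Next I would set up the bijection $x\mapsto y:=x_1x_2\ldots x_k$ between palindromes of length $m$ with $x_{k+1}=0$ and arbitrary binary words $y$ of length $k$. Using the parity-preservation remark, each $0$ of $y$ at an odd (respectively even) position contributes two $0$'s at odd (respectively even) positions of $x$, while the single central $0$ contributes one extra $0$ whose parity class is that of the position $k+1$. This yields $Z(x)=2\,Z(y)+1$ when $k$ is even (the centre lies at an odd position) and $Z(x)=2\,Z(y)-1$ when $k$ is odd. Therefore the palindromes with $Z(x)=\pm1$ correspond bijectively to the half-words $y$ of length $k$ with $Z(y)\in\{0,-1\}$ (for $k$ even) or $Z(y)\in\{0,1\}$ (for $k$ odd).

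The enumeration of half-words of length $k$ by the value of $Z$ is precisely what Lemma~\ref{lem0} supplies. For $k$ even the count is $\mid S_k^{(0)}\mid+\mid G_k^{(1)}\mid={k \choose k/2}+{k \choose k/2-1}$, and for $k$ odd it is $\mid R_k^{(1)}\mid+\mid S_k^{(0)}\mid={k \choose (k+1)/2}+{k \choose (k-1)/2}$, where in the latter I rewrite $\mid R_k^{(1)}\mid$ by binomial symmetry so that Pascal's rule applies. In both cases Pascal's identity collapses the two summands into the single central binomial coefficient ${k+1 \choose \lfloor(k+1)/2\rfloor}$. Substituting $k=(m-1)/2$, so that $k+1=(m+1)/2$ and $\lfloor(k+1)/2\rfloor=\lfloor(m+1)/4\rfloor$, delivers the claimed value ${(m+1)/2 \choose \lfloor(m+1)/4\rfloor}$.

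The one genuine piece of care I anticipate is the bookkeeping around the central letter: because the parity of the position $k+1$ flips with the parity of $k$, the translation of $Z(x)$ into $Z(y)$ splits into two cases with shifted target sets, and one must match the indices of Lemma~\ref{lem0} correctly in each. It is also worth noting explicitly that the unique non-vertex $0(10)^{\lfloor m/2\rfloor}$ has $Z=\lceil m/2\rceil\ge2$ and therefore never appears among the words counted here, so no palindrome is spuriously included or excluded. Once the parity observation is in place, the remainder is a direct application of Lemma~\ref{lem0} together with Pascal's identity.
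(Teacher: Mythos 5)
Your proposal is correct and follows essentially the same route as the paper: both decompose an odd-length palindrome as $w0\overline{w}$, use the parity of $k=(m-1)/2$ to get $Z(x)=2Z(w)+1$ (for $k$ even) or $Z(x)=2Z(w)-1$ (for $k$ odd), and then count the admissible half-words $w$ with Lemma~\ref{lem0} plus Pascal's identity. The only difference is presentational: you explicitly justify that the central letter must be $0$ (via the mod-$2$ argument) and that the excluded word $0(10)^{\lfloor m/2\rfloor}$ causes no trouble, two points the paper's proof asserts or leaves implicit.
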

 \begin{proof}
Consider a palindrome from $ V( {\cal B}^{*(1)}_{m})$ where  $m=2k+1$, $ k \in N$. Note that its forme must be $w0\overline{w}$. \\
\underline{{\bf Case I:}} \  If $k$ is even, then  $Z(\overline{w}) = -Z(w)$ and $Z(w0\overline{w}) = Z(w) + 1 -Z(\overline{w}) = 2 Z(w) + 1$. Since $\mid Z(w0\overline{w})\mid = 1$, we have that
$Z(w)= -1$ or $Z(w)= 0$. It implies that $  w \in G_{k}^{(1)} \cup   S_{k}^{(0)}$. \\
From  Theorem~\ref{conj:1} and Lemma~\ref{lem0}, since  $k$  is even, we obtain that
the number of all palindromes is

$\ds  \mid G_{k}^{(1)} \cup   S_{k}^{(0)} \mid = {k \choose k/2 -1} + {k \choose k/2} = {k+1 \choose  k/2} = {(m+1)/2 \choose  \lfloor(m+1)/4 \rfloor}$.
\\
\\
\underline{{\bf Case II:}} \  If $k$ is odd, then  $Z(\overline{w}) = Z(w)$ and $Z(w0\overline{w}) = Z(w) - 1 +Z(\overline{w}) = 2 Z(w) - 1$. Since $\mid Z(w0\overline{w})\mid = 1$, we have that
$Z(w)= 0$ or $Z(w)= 1$. It implies that $  w \in   S_{k}^{(0)}  \cup  R_{k}^{(1)}$. \\
Applying Theorem~\ref{conj:1} and Lemma~\ref{lem0} again,
 we obtain that the number of all palindromes is

 $\ds  \mid S_{k}^{(0)} \cup   R_{k}^{(1)} \mid = {k \choose \lfloor k/2 \rfloor } + {k \choose \lfloor k/2 \rfloor} = {k+1 \choose   \lfloor (k +1)/2 \rfloor } = {(m+1)/2 \choose  \lfloor(m+1)/4 \rfloor }$, in both cases the same.
 $\Box$ \end{proof}

Now, equation \eqref{e0} is a  trivial consequence of Lemma~\ref{lem1} and Theorem~\ref{conj:1}.  Namely,
for $m$-odd,
$$ \mid V({\cal R}^{*}_{m}) \mid =  \mid V({\cal B}^{*(1)}_{m}) \mid  = {m+1 \choose (m+1)/2 -1} = {m+1 \choose (m-1)/2 }. $$

In order to obtain
 the number of vertices in    digraph ${\cal R}^{**}_{m}$
 note that this number is equal to  the number of vertices in ${\cal R}^{*}_{m}$ decreaseded by the half of the number of its vertices which are not palindromes.
 Thus, using just proved  (\ref{e0}) and  Lemma~\ref{lem2} we obtain
  $$ \mid V({\cal R}^{**}_{m}) \mid  = \mid  V({\cal R}^{*}_{m}) \mid - \frac{1}{2}\left[ \mid  V({\cal R}^{*}_{m}) \mid -  {(m+1)/2  \choose \lfloor (m+1)/4  \rfloor} \right] =
  \frac{1}{2}  \left[{m+1 \choose (m-1)/2 } + {(m+1)/2  \choose \lfloor (m+1)/4  \rfloor}\right]. $$
 In this way we have  completed  the   proof of main theorem. $\Box$

\vspace*{1cm}

\noindent
REMARK. \
We can offer the shorter proof for the
Theorem 3.18 in  \cite{DjBD1} which claims that all palindromes from $ V({\cal D}^{*}_{m}) $ when $m$ is even belong to the component   $ {\cal A}^{*}_{m}$. Namely,
if $w$ is a palindrome ($w = \overline{w} $), then $Z(w) =  Z(\overline{w})$. Since $m$ is even, we have that $Z(\overline{w}) = -Z(w)$. Consequently, $Z(w) =-Z(w)$, i.e. $ Z(w)=0$.
It implies that $w \in \langle  S_{m}^{(0)} \rangle_{{\cal D}^{*}_{m}} =  {\cal A}^{*}_{m}$. $\Box$


\vspace{1.5cc}
\begin{center}
{\bf ACKNOWLEDGEMENTS}
\end{center}


This work  was  supported by   the Project of the Department for fundamental disciplines in technology, Faculty of Technical Sciences, University of Novi Sad "Application of general disciplines in technical and IT sciences".



\noindent Faculty of Technical Sciences,
  University of Novi Sad,
  Novi Sad, Serbia\\
     E-mail: jelenadjokic@uns.ac.rs
\vspace*{0.5cm}

\end{document}